\theoremstyle{plain}
\newtheorem{thm}{Theorem}
  \theoremstyle{definition}
  \newtheorem{defn}[thm]{Definition}
  \theoremstyle{remark}
  \newtheorem{rem}[thm]{Remark}
  \theoremstyle{plain}
  \newtheorem{lem}[thm]{Lemma}
  \theoremstyle{plain}
  \newtheorem{prop}[thm]{Proposition}
 \theoremstyle{definition}
  \newtheorem{example}[thm]{Example}
\begin{document}

\title{Morita Transforms of Tensor Algebras}

\author{Paul S. Muhly and Baruch Solel}

\address{Department of Mathematics\\
 University of Iowa\\
 Iowa City, IA 52242}

\email{pmuhly@math.uiowa.edu}

\address{Department of Mathematics\\
 Technion\\
 32000 Haifa\\
 Israel}

\email{mabaruch@techunix.technion.ac.il}

\keywords{Morita equivalence, $C^{*}$-correspondence, stabilization, representations,
tensor algebra, Hardy algebra.}

\thanks{The research of both authors was supported in part by a U.S.-Israel
Binational Science Foundation grant. The second author gratefully acknowledges
support from the Lowengart Research Fund.}

\subjclass[2000]{Primary: 46H25, 47L30, 47L55, Secondary: 46H25, 47L65}
\begin{abstract}
We show that if $M$ and $N$ are $C^{*}$-algebras and if $E$ (resp.
$F$) is a $C^{*}$-correspondence over $M$ (resp. $N$), then a
Morita equivalence between $(E,M)$ and $(F,N)$ implements a isometric
functor between the categories of Hilbert modules over the tensor
algebras of $\mathcal{T}_{+}(E)$ and $\mathcal{T}_{+}(F)$. We show
that this functor maps absolutely continuous Hilbert modules to absolutely
continuous Hilbert modules and provides a new interpretation of Popescu's
reconstruction operator. 
\end{abstract}
\maketitle

\section{Introduction}

Suppose $M$ is a $C^{*}$-algebra and $E$ is a $C^{*}$-correspondence
over $M$ in the sense of \cite{MS98b}. This means, first of all,
that $E$ is a (right) Hilbert $C^{*}$-module, and secondly, that
if $\mathcal{L}(E)$ denotes the space of all bounded adjointable
module maps on $E$, then $E$ becomes a left $M$-module via a $C^{*}$-homomorphism
$\varphi_{M}$ from $M$ into $\mathcal{L}(E)$. To emphasize the
connection between $E$ and $M$, we will call the pair, $(E,M)$,
a \emph{$C^{*}$-correspondence pair}. Form the \emph{Fock space}
built from $(E,M$), $\mathcal{F}(E)$. This is the direct sum $\sum_{n\geq0}E^{\otimes n}$,
where $E^{\otimes n}$ is the internal tensor product of $E$ with
itself $n$ times. (The tensor products are balanced over $M$.) The
Fock space $\mathcal{F}(E)$ is, itself, a $C^{*}$-correspondence
over $M$ and we write $\varphi_{M\infty}$ for the left action of
$M$. For $\xi\in E$, $T_{\xi}$ denotes the creation operator on
$\mathcal{F}(E)$ determined by $\xi$, i.e., for $\eta\in\mathcal{F}(E)$,
$T_{\xi}\eta=\xi\otimes\eta$. We let $\mathcal{T}_{+}(E)$ denote
the norm closed subalgebra of $\mathcal{L}(\mathcal{F}(E))$ generated
by $\varphi_{M\infty}(M)$ and $\{T_{\xi}\mid\xi\in E\}$, and we
call $\mathcal{T}_{+}(E)$ \emph{the tensor algebra of} $E$ or of
$(E,M)$. In \cite[Defintion 2.1]{MS2000}, we introduced the following
definition.
\begin{defn}
\label{def:Morita-equi-Wstar-correspondence}We say that two $C^{*}$-correspondence
pairs $(E,M)$ and $(F,N)$ are \emph{Morita equivalent} in case there
is a $C^{*}$-equivalence bimodule $\mathcal{X}$ in the sense of
\cite[Definition 7.5]{mR74a} such that \[
\mathcal{X}\otimes_{N}F\simeq E\otimes_{M}\mathcal{X}\]
 as $C^{*}$-correspondences. In this case, we say that $\mathcal{X}$
\emph{implements} a Morita equivalence between $(E,M)$ and $(F,N)$. 
\end{defn}
Observe that the equation $\mathcal{X}\otimes_{N}F\simeq E\otimes_{M}\mathcal{X}$
is equivalent to the equation $\mathcal{X}\otimes_{N}F\otimes_{N}\widetilde{\mathcal{X}}\simeq E$
and to the equation $F\simeq\widetilde{\mathcal{X}}\otimes_{M}E\otimes_{M}\mathcal{X}$,
where $\widetilde{\mathcal{X}}$ is the dual or opposite module of
$\mathcal{X}$. We showed there that if $(E,M)$ and $(F,N)$ are
Morita equivalent, then the tensor algebras $\mathcal{T}_{+}(E)$
and $\mathcal{T}_{+}(F)$ are Morita equivalent in the sense of \cite{BMP2000}.
It follows that $\mathcal{T}_{+}(E)$ and $\mathcal{T}_{+}(F)$ have
isometrically isomorphic representation theories. However, when looking
at the formulas involved in the isomorphism between the representation
theories, certain details become obscure. Our objective in this note
is to show that simply tensoring with $\mathcal{X}$ implements an
explicit isometric isomorphism between the representation theories
of $\mathcal{T}_{+}(E)$ and $\mathcal{T}_{+}(F)$ in a fashion that
preserves important properties that we shall introduce shortly. The
first step is to have a clear picture of the representation theory
of an operator tensor algebra.

\section{The Representations of $\mathcal{T}_{+}(E)$}

We begin with a restatement of Theorem 3.10 in \cite{MS98b}. 
\begin{thm}
\label{thm:Disintigration}Let $\rho$ be a completely contractive
representation of $\mathcal{T}_{+}(E)$ on a Hilbert space $H$. Define
$\sigma:M\to B(H)$ by the formula $\sigma(a)=\rho\circ\varphi_{M\infty}(a)$
and define $T:E\to B(H)$ by the formula $T(\xi)=\rho(T_{\xi})$.
Then $\sigma$ is a $C^{*}$-representation of $M$ on $H$ and $T$
is a completely contractive bimodule map in the sense that $T(\varphi_{M}(a)\xi b)=\sigma(a)T(\xi)\sigma(b)$
for all $a,b\in M$ and all $\xi\in E$. Conversely, given a $C^{*}$-representation
$\sigma:M\to B(H)$ and a completely contractive bimodule map $T:E\to B(H)$,
there is a unique completely contractive representation $\rho:\mathcal{T}_{+}(E)\to B(H)$
such that $\sigma=\rho\circ\varphi_{M}$ and $T(\xi)=\rho(T_{\xi})$
for all $\xi\in E$. 
\end{thm}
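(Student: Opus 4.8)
The plan is to prove the two implications separately; the forward direction is essentially a computation inside $\mathcal{T}_{+}(E)$, while the converse will require a dilation argument. For the forward direction, I would first note that $\varphi_{M\infty}(M)$ is a $C^{*}$-subalgebra of $\mathcal{T}_{+}(E)$, so the restriction of the completely contractive homomorphism $\rho$ to it is (after passing to an approximate identity or the unitization) a unital completely contractive map of a $C^{*}$-algebra, hence completely positive and therefore a $*$-homomorphism; thus $\sigma=\rho\circ\varphi_{M\infty}$ is a $C^{*}$-representation. Next I would record the two algebraic identities $\varphi_{M\infty}(a)T_{\xi}=T_{\varphi_{M}(a)\xi}$ and $T_{\xi}\varphi_{M\infty}(b)=T_{\xi b}$, both immediate from $T_{\xi}\eta=\xi\otimes\eta$ and the $M$-balancing of the internal tensor product (which gives $\xi\otimes\varphi_{M}(b)\eta=(\xi b)\otimes\eta$). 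Applying $\rho$ to the product $\varphi_{M\infty}(a)T_{\xi}\varphi_{M\infty}(b)=T_{\varphi_{M}(a)\xi b}$ then yields the covariance relation $T(\varphi_{M}(a)\xi b)=\sigma(a)T(\xi)\sigma(b)$. Finally, since $\xi\mapsto T_{\xi}$ is a complete isometry of $E$ into $\mathcal{T}_{+}(E)$ (this is how the operator-space structure on $E$ is defined), complete contractivity of $T=\rho\circ(\xi\mapsto T_{\xi})$ is inherited from that of $\rho$.

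For the converse, uniqueness is immediate: the prescribed values $\rho(\varphi_{M\infty}(a))=\sigma(a)$ and $\rho(T_{\xi})=T(\xi)$ determine $\rho$ on the algebra they generate, which is norm-dense in $\mathcal{T}_{+}(E)$. The content is existence, namely that this assignment extends to a completely contractive homomorphism. The key reformulation is that complete contractivity of the covariant bimodule map $T$ is equivalent to the single operator $\tilde{T}\colon E\otimes_{\sigma}H\to H$, $\tilde{T}(\xi\otimes h)=T(\xi)h$, being a well-defined contraction intertwining the left actions, i.e.\ $\tilde{T}(\varphi_{M}(a)\otimes I_{H})=\sigma(a)\tilde{T}$; the bimodule property is precisely what makes $\tilde{T}$ well defined on the balanced tensor product.

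The main step is then a dilation. I would dilate the contractive covariant pair $(\sigma,T)$ to an isometric one $(\hat{\sigma},\hat{T})$ on a Hilbert space $K\supseteq H$ — that is, with $\tilde{\hat{T}}$ an isometry — arranged so that $H$ is coinvariant for $\hat{\rho}(\mathcal{T}_{+}(E))$, with $\sigma(a)=P_{H}\hat{\sigma}(a)|_{H}$ and $T(\xi)=P_{H}\hat{T}(\xi)|_{H}$. For an \emph{isometric} covariant pair, the induced Fock representation on $\mathcal{F}(E)\otimes_{\hat{\sigma}}K$ produces a representation $\hat{\rho}$ of $\mathcal{T}_{+}(E)$ that is the restriction of a $*$-representation of the Toeplitz algebra $\mathcal{T}(E)$, and is therefore completely contractive. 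Because $H$ is coinvariant for $\hat{\rho}(\mathcal{T}_{+}(E))$, the compression $\rho(\cdot)=P_{H}\hat{\rho}(\cdot)|_{H}$ is multiplicative (the $H^{\perp}$-component discarded at each stage is mapped back into $H^{\perp}$), and it is completely contractive as a compression of a completely contractive map. Verifying that this $\rho$ agrees with $\sigma$ and $T$ on the generators then completes the proof.

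I expect the construction and verification of the isometric dilation, together with the argument that the compression remains multiplicative on all of $\mathcal{T}_{+}(E)$ — equivalently, the von Neumann–type inequality ensuring that the iterated products of $\tilde{T}$ assemble into a completely contractive map — to be the main obstacle. The equivalence between complete contractivity of $T$ and contractivity of $\tilde{T}$ is a secondary technical point, but one that must be pinned down carefully with the correct operator-space structure on $E$.
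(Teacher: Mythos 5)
The paper offers no proof of this theorem---it is presented as a restatement of \cite[Theorem 3.10]{MS98b}---and your outline follows exactly the strategy of the proof given there: the forward direction via the identities $\varphi_{M\infty}(a)T_{\xi}\varphi_{M\infty}(b)=T_{\varphi_{M}(a)\xi b}$ together with the automatic self-adjointness of contractive homomorphisms on $C^{*}$-algebras, and the converse by passing to the intertwiner $\widetilde{T}$, dilating to an isometric covariant pair, integrating the isometric pair through the Toeplitz $C^{*}$-algebra, and compressing to the coinvariant subspace $H$. The one substantive step you name but do not carry out---the existence of the isometric dilation of a completely contractive covariant pair---is precisely \cite[Theorem 3.3]{MS98b}, so your sketch is correct in structure but defers the ingredient that bears essentially all of the weight of the original argument.
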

If $T$ is a completely contractive bimodule map with respect to a
$C^{*}$-representation $\sigma$ of $M$, then we call $(T,\sigma)$
a \emph{completely contractive covariant pair}. We call the completely
contractive representation $\rho$ of $\mathcal{T}_{+}(E)$ that $(T,\sigma)$
determines the \emph{integrated form} of $(T,\sigma)$ and write $\rho=T\times\sigma$.
Theorem \ref{thm:Disintigration} begs the question: How does one
construct completely contractive covariant pairs? For this purpose,
we need to recall the definition of Rieffel's induced representation
\cite{mR74b}. If $\sigma:M\to B(H)$ is a Hilbert space representation
of $M$, then we may build the Hilbert space $E\otimes_{\sigma}H$,
which is the separated completion of the algebraic tensor product
$E\otimes H$ in the pre-inner product defined by the formula \[
\langle\xi\otimes h,\eta\otimes k\rangle:=\langle h,\sigma(\langle\xi,\eta\rangle)k\rangle,\qquad\xi\otimes h,\eta\otimes k\in E\otimes H.\]
 The representation $\sigma^{E}$ of $\mathcal{L}(E)$ on $E\otimes_{\sigma}H$
defined by the formula, $\sigma^{E}(T):=T\otimes I$, $T\in\mathcal{L}(E)$,
is called the representation of $\mathcal{L}(E)$ \emph{induced} by
$\sigma$. The following theorem is essentially Lemma 2.5 of \cite{MSHardy}.
\begin{thm}
\label{thm:Intertwiner-Covariant}Let $\sigma:M\to B(H)$ be a $C^{*}$-representation.
A completely contractive linear map $T$ from $E$ to $B(H)$ is a
bimodule map with respect to $\sigma$ if and only if there is an
operator $\widetilde{T}:E\otimes_{\sigma}H\to H$ with $\Vert\widetilde{T}\Vert\leq1$
such that $\widetilde{T}\sigma^{E}\circ\varphi_{M}=\sigma\widetilde{T}$
and $T(\xi)h=\widetilde{T}(\xi\otimes h)$, for all $\xi\otimes h\in E\otimes_{\sigma}H$. 
\end{thm}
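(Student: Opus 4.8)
The plan is to let the formula $\widetilde{T}(\xi\otimes h)=T(\xi)h$ govern the passage between $T$ and $\widetilde{T}$, and to match the right $M$-module property of $T$ against well-definedness over the balanced tensor product, the left $M$-module property against the intertwining relation $\widetilde{T}\sigma^{E}(\varphi_{M}(a))=\sigma(a)\widetilde{T}$, and complete contractivity of $T$ against $\Vert\widetilde{T}\Vert\leq1$. The two module identities are bookkeeping; the norm matching is the only real content and is the source of the single hard step.

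Consider first the implication that the existence of $\widetilde{T}$ forces $T$ to be a completely contractive bimodule map. Put $T(\xi)h:=\widetilde{T}(\xi\otimes h)$. Since $\xi b\otimes h=\xi\otimes\sigma(b)h$ in $E\otimes_{\sigma}H$ we get $T(\xi b)=T(\xi)\sigma(b)$, and since $\varphi_{M}(a)\xi\otimes h=\sigma^{E}(\varphi_{M}(a))(\xi\otimes h)$ the intertwining hypothesis gives $T(\varphi_{M}(a)\xi)=\sigma(a)T(\xi)$; together these are the bimodule law. For complete contractivity take $X=[\xi_{ij}]\in M_{n}(E)$ and $\mathbf{h}=(h_{j})\in H^{n}$. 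Then $T_{n}(X)\mathbf{h}=\big(\widetilde{T}(\sum_{j}\xi_{ij}\otimes h_{j})\big)_{i}$, so by $\Vert\widetilde{T}\Vert\leq1$ we have $\Vert T_{n}(X)\mathbf{h}\Vert^{2}\leq\sum_{i}\Vert\sum_{j}\xi_{ij}\otimes h_{j}\Vert^{2}=\langle\mathbf{h},\sigma^{(n)}(X^{*}X)\mathbf{h}\rangle\leq\Vert X^{*}X\Vert\,\Vert\mathbf{h}\Vert^{2}=\Vert X\Vert_{M_{n}(E)}^{2}\Vert\mathbf{h}\Vert^{2}$, where $\sigma^{(n)}$ is the amplification of $\sigma$ to $M_{n}(M)$ and $X^{*}X=[\sum_{i}\langle\xi_{ij},\xi_{il}\rangle]\in M_{n}(M)$. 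Hence $\Vert T_{n}\Vert\leq1$ for every $n$.

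For the converse, assume $T$ is a completely contractive bimodule map and define $\widetilde{T}$ on the algebraic balanced tensor product by $\widetilde{T}(\sum_{i}\xi_{i}\otimes h_{i})=\sum_{i}T(\xi_{i})h_{i}$; the right-module identity $T(\xi b)=T(\xi)\sigma(b)$ shows this respects the $M$-balancing, the intertwining relation is immediate from the left-module identity, and $T(\xi)h=\widetilde{T}(\xi\otimes h)$ holds by construction. The crux is $\Vert\widetilde{T}\Vert\leq1$. Computing as above, $\Vert\widetilde{T}(\sum_{i}\xi_{i}\otimes h_{i})\Vert^{2}=\sum_{i,j}\langle h_{i},T(\xi_{i})^{*}T(\xi_{j})h_{j}\rangle$ while $\Vert\sum_{i}\xi_{i}\otimes h_{i}\Vert^{2}=\sum_{i,j}\langle h_{i},\sigma(\langle\xi_{i},\xi_{j}\rangle)h_{j}\rangle$, so $\Vert\widetilde{T}\Vert\leq1$ is exactly the family of operator inequalities $[T(\xi_{i})^{*}T(\xi_{j})]\leq[\sigma(\langle\xi_{i},\xi_{j}\rangle)]$ in $M_{n}(B(H))$, ranging over all finite families $\xi_{1},\dots,\xi_{n}\in E$. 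To obtain these I would dilate: the covariant data assembles into a completely contractive map on the operator space of degree $\leq1$ elements $\{\varphi_{M\infty}(a)+T_{\xi}\}$ of $\mathcal{T}_{+}(E)$, which I then extend by Arveson's theorem (after unitizing $M$ if necessary) to a unital completely positive map $\Phi$ on the Toeplitz $C^{*}$-algebra $C^{*}(\mathcal{T}_{+}(E))$. Using the Fock-space relation $T_{\xi}^{*}T_{\eta}=\varphi_{M\infty}(\langle\xi,\eta\rangle)$ together with the matricial Kadison--Schwarz inequality for $\Phi$ then gives $[T(\xi_{i})^{*}T(\xi_{j})]=[\Phi(T_{\xi_{i}})^{*}\Phi(T_{\xi_{j}})]\leq[\Phi(T_{\xi_{i}}^{*}T_{\xi_{j}})]=[\sigma(\langle\xi_{i},\xi_{j}\rangle)]$, as needed.

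The hard part is precisely this dilation step, and it is where complete (not merely) contractivity is indispensable: the intrinsic operator-space norm on $M_{n}(E)$ sees only $M$, so a bare contraction estimate yields the scalar bound $\Vert\sum_{j}T(\xi_{j})h_{j}\Vert\leq\Vert[\langle\xi_{i},\xi_{j}\rangle]\Vert^{1/2}\Vert\mathbf{h}\Vert$, which is too weak; the $\sigma$-weighted matrix inequality can only be reached by promoting the covariant data to a completely positive map and invoking its Schwarz inequality. I expect the remaining care to go into verifying that the degree $\leq1$ map is genuinely completely contractive on $C^{*}(\mathcal{T}_{+}(E))$ and into the non-unital bookkeeping, both of which are routine once the dilation is set up.
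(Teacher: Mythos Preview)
The paper does not give its own proof of this theorem; it simply records it as ``essentially Lemma 2.5 of \cite{MSHardy}.'' So I will evaluate your argument on its own.

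Your backward direction is fine, and you have correctly isolated the crux of the forward direction: the operator-matrix inequality $[T(\xi_i)^*T(\xi_j)]\leq[\sigma(\langle\xi_i,\xi_j\rangle)]$. The gap is in how you propose to obtain it. You want to extend the degree-$\leq 1$ map $\varphi_{M\infty}(a)+T_\xi\mapsto\sigma(a)+T(\xi)$ to a unital completely positive $\Phi$ via Arveson and then invoke Kadison--Schwarz. But Arveson requires the input map to be completely contractive (or completely positive on an operator system), and this is precisely the step you wave away as ``routine.'' It is not: once you enlarge the domain to the operator system spanned by $\varphi_{M\infty}(M)$, $\{T_\xi\}$, and $\{T_\xi^*\}$, complete positivity of the resulting map is \emph{equivalent} to the matrix inequality you are trying to prove, so the argument is circular as written. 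Nor can you safely borrow Theorem~\ref{thm:Disintigration}: in \cite{MS98b} the integrated form $T\times\sigma$ is constructed \emph{through} the intertwiner $\widetilde{T}$, so invoking it here would again be circular.

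The standard direct argument avoids dilation altogether by exploiting the right-module law to renormalize. Given $\xi_1,\dots,\xi_n$, set $A=[\langle\xi_i,\xi_j\rangle]\in M_n(M)^+$ and, for $\epsilon>0$, put $\eta_l=\sum_j\xi_j\,((A+\epsilon)^{-1/2})_{jl}$. Then $[\langle\eta_k,\eta_l\rangle]=(A+\epsilon)^{-1/2}A(A+\epsilon)^{-1/2}\leq I$, so the row $[\eta_1,\dots,\eta_n]\in M_{1,n}(E)$ has norm at most $1$, and complete contractivity of $T$ applied to this \emph{row} gives $\big\|\sum_l T(\eta_l)k_l\big\|\leq\|k\|$ for every $k\in H^n$. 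The bimodule identity $T(\xi b)=T(\xi)\sigma(b)$ now converts this back: $\sum_l T(\eta_l)k_l=\sum_j T(\xi_j)h_j$ with $h=\sigma^{(n)}((A+\epsilon)^{-1/2})k$, so taking $k=\sigma^{(n)}((A+\epsilon)^{1/2})h$ and letting $\epsilon\to0$ yields
\[
\Big\|\sum_j T(\xi_j)h_j\Big\|\leq\big\|\sigma^{(n)}(A^{1/2})h\big\|=\Big\|\sum_j\xi_j\otimes h_j\Big\|.
\]
This is exactly where the bimodule hypothesis earns its keep: it lets you upgrade the scalar bound $\|A\|^{1/2}\|h\|$ (which, as you correctly note, is all that raw complete contractivity delivers) to the $\sigma$-weighted bound, with no dilation machinery needed.
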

Thus the completely contractive bimodule maps are in bijective correspondence
with (contractive) intertwiners. The space of intertwiners of $\sigma$
and $\sigma^{E}\circ\varphi_{M}$ is a key player in our theory and
to keep the notation manageable, when there is no risk of confusion
in the context under discussion, we shall not distinguish notationally
between bimodule maps $T$ and the corresponding intertwiner $\widetilde{T}$.
Further, for reasons that will be explained in a minute, we frequently
also denote bimodule maps by lower case fraktur letters from the end
of the alphabet, as we do now. 
\begin{defn}
\label{def:sigma-dual}Let $\sigma:M\to B(H)$ be a $C^{*}$-representation.
\emph{The $\sigma$-dual} of $E$, denoted $E^{\sigma}$, is defined
to be $\{\mathfrak{z}\in B(H,E\otimes_{\sigma}H)\mid\mathfrak{z}\sigma=\sigma^{E}\circ\varphi_{M}\mathfrak{z}\}$.
We write $E^{\sigma*}$ for the space $\{\mathfrak{z}^{*}\mid\mathfrak{z}\in E^{\sigma}\}$
and we write $\mathbb{D}(E^{\sigma*})$ for $\{\mathfrak{z}^{*}\in E^{\sigma*}\mid\Vert\mathfrak{z}^{*}\Vert<1\}$,
i.e., $\mathbb{D}(E^{\sigma*})$ is the open unit ball in $E^{\sigma*}$. 
\end{defn}
Thanks to Theorem \ref{thm:Intertwiner-Covariant}, $\overline{\mathbb{D}(E^{\sigma*})}$
labels \emph{all} the completely contractive representations $\rho$
of $\mathcal{T}_{+}(E)$ with the property that $\rho\circ\varphi_{M\infty}=\sigma$.
The reason we introduced $E^{\sigma}$, instead of focusing exclusively
on $E^{\sigma*}$ is that $E^{\sigma}$ is a $W^{*}$-correspondence
over $\sigma(M)'$. (A $W^{*}$-correspondence is a $C^{*}$-correspondence
with some additional structure that we discuss below.) For $\mathfrak{z}_{1},\mathfrak{z}_{2}\in E^{\sigma}$,
$\langle\mathfrak{z}_{1},\mathfrak{z}_{2}\rangle:=\mathfrak{z}_{1}^{*}\mathfrak{z}_{2}$,
and the $\sigma(M)'$-bimodule actions are given by the formula\[
a\cdot\mathfrak{z}\cdot b:=(I_{E}\otimes a)\mathfrak{z}b,\qquad a,b\in\sigma(M)',\mathfrak{z}\in E^{\sigma},\]
where the products on the right are just composition of the maps involved.
The reason for introducing the notation $\overline{\mathbb{D}(E^{\sigma*})}$
and writing elements in this ball as lower case $\mathfrak{z}$'s,
$\mathfrak{w}$'s, etc., is that we may view an element $F\in\mathcal{T}_{+}(E)$
as a function $\widehat{F}$ on $\overline{\mathbb{D}(E^{\sigma*})}$
via the formula \[
\widehat{F}(\mathfrak{z}):=\mathfrak{z}\times\sigma(F),\qquad\mathfrak{z}\in\overline{\mathbb{D}(E^{\sigma*})}.\]
Functions of the form $\widehat{F}$ are bonafide $B(H_{\sigma})$-valued
analytic functions on $\mathbb{D}(E^{\sigma*})$ with additional very
interesting properties, and they can be studied with function-theoretic
techniques. (See \cite{MSHardy,MS08,MS09}.) For the purpose of emphasizing
the function-theoretic properties of the $\widehat{F}$'s, it seems
preferable to write their arguments as $\mathfrak{z}$'s instead of
$T$'s. But when representation-theoretic features need emphasis,
the use of $T$ and $T\times\sigma$ is sometimes preferable.

\section{The Functor}

Our objective in this section is to show that Morita equivalence of
$C^{*}$-correspondence pairs $(E,M)$ and $(F,N)$ gives rise to
a natural isometric isomorphism between representation theory of $\mathcal{T}_{+}(E)$
and $\mathcal{T}_{+}(F)$. 
\begin{thm}
\label{thm:functor}Suppose $(E,M)$ and $(F,N)$ are Morita equivalent
$C^{*}$-correspondence pairs via an $M,N$-equivalence bimodule $\mathcal{X}$
and correspondence isomorphism $W:E\otimes_{M}\mathcal{X}\to\mathcal{X}\otimes_{N}F$.
Suppose further that $\sigma:N\to B(H)$ is a $C^{*}$-representation
and let $\sigma^{\mathcal{X}}:M\to B(\mathcal{X}\otimes_{\sigma}H)$
be the representation of $M$ induced by $\mathcal{X}$. Then for
each $\mathfrak{z}^{*}\in\overline{\mathbb{D}(F^{\sigma*})}$, $\mathfrak{z}^{*\mathcal{X}}:=(I_{\mathcal{X}}\otimes\mathfrak{z}^{*})(W\otimes I_{H})$
lies in $\overline{\mathbb{D}(E^{\sigma^{\mathcal{X}}*})}$ and the
map $\mathfrak{z}^{*}\to\mathfrak{z}^{*\mathcal{X}}$ is an isometric
surjection onto $\overline{\mathbb{D}(E^{\sigma^{\mathcal{X}}*})}$.\end{thm}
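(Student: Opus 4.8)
The plan is to make the three relevant Hilbert-space identifications explicit, verify membership in the ball, and then extract the isometric-surjective property from the inverse equivalence bimodule $\widetilde{\mathcal{X}}$. Write $K:=\mathcal{X}\otimes_{\sigma}H$, so that, denoting by $\varphi_{\mathcal{X}}$ the left $M$-action on $\mathcal{X}$, we have $\sigma^{\mathcal{X}}(a)=\varphi_{\mathcal{X}}(a)\otimes I_{H}$ on $K$. I would fix once and for all the associativity unitaries $E\otimes_{\sigma^{\mathcal{X}}}K=E\otimes_{M}(\mathcal{X}\otimes_{\sigma}H)\cong(E\otimes_{M}\mathcal{X})\otimes_{\sigma}H$ and $(\mathcal{X}\otimes_{N}F)\otimes_{\sigma}H\cong\mathcal{X}\otimes_{N}(F\otimes_{\sigma}H)$, and note that $W\otimes I_{H}$ carries the first space onto the second. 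The key preliminary observation is that the intertwining relation defining $F^{\sigma}$ says, after taking adjoints, exactly that $\mathfrak{z}^{*}\colon F\otimes_{\sigma}H\to H$ is a left $N$-module map, intertwining $\varphi_{N}(b)\otimes I_{H}$ with $\sigma(b)$; this is precisely what makes $I_{\mathcal{X}}\otimes\mathfrak{z}^{*}\colon\mathcal{X}\otimes_{N}(F\otimes_{\sigma}H)\to\mathcal{X}\otimes_{\sigma}H=K$ well defined. Composing, $\mathfrak{z}^{*\mathcal{X}}$ becomes a bounded operator from $E\otimes_{\sigma^{\mathcal{X}}}K$ to $K$, so its adjoint is the candidate element of $E^{\sigma^{\mathcal{X}}}$.

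Since a correspondence isomorphism is unitary, $W\otimes I_{H}$ is unitary, and $\Vert I_{\mathcal{X}}\otimes\mathfrak{z}^{*}\Vert\le\Vert\mathfrak{z}^{*}\Vert\le1$, so $\Vert\mathfrak{z}^{*\mathcal{X}}\Vert\le1$. For the intertwining condition $(\mathfrak{z}^{*\mathcal{X}})^{*}\in E^{\sigma^{\mathcal{X}}}$, equivalently $\sigma^{\mathcal{X}}(a)\mathfrak{z}^{*\mathcal{X}}=\mathfrak{z}^{*\mathcal{X}}(\varphi_{M}(a)\otimes I_{K})$ for all $a\in M$, I would push $\varphi_{M}(a)\otimes I_{K}=(\varphi_{M}(a)\otimes I_{\mathcal{X}})\otimes I_{H}$ through the two factors. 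Because $W$ is a left $M$-module map it intertwines $\varphi_{M}(a)\otimes I_{\mathcal{X}}$ with $\varphi_{\mathcal{X}}(a)\otimes I_{F}$, and $I_{\mathcal{X}}\otimes\mathfrak{z}^{*}$ visibly commutes with $\varphi_{\mathcal{X}}(a)\otimes I$ since it acts as the identity on the $\mathcal{X}$-leg; tracking $\varphi_{\mathcal{X}}(a)\otimes I_{H}=\sigma^{\mathcal{X}}(a)$ out on the left yields the required identity. Hence $\mathfrak{z}^{*\mathcal{X}}\in\overline{\mathbb{D}(E^{\sigma^{\mathcal{X}}*})}$.

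The assignment $\mathfrak{z}^{*}\mapsto\mathfrak{z}^{*\mathcal{X}}$ is linear, so isometry reduces to the equality $\Vert\mathfrak{z}^{*\mathcal{X}}\Vert=\Vert\mathfrak{z}^{*}\Vert$ on all of $F^{\sigma*}$. Having $\le$ already, I would obtain the reverse inequality from $\widetilde{\mathcal{X}}$: applying $I_{\widetilde{\mathcal{X}}}\otimes(-)$ and using $\widetilde{\mathcal{X}}\otimes_{M}\mathcal{X}\cong N$ together with $N\otimes_{\sigma}H\cong H$, the operator $I_{\widetilde{\mathcal{X}}}\otimes I_{\mathcal{X}}\otimes\mathfrak{z}^{*}$ is identified with $\mathfrak{z}^{*}$ itself, so $\Vert\mathfrak{z}^{*}\Vert=\Vert I_{\widetilde{\mathcal{X}}}\otimes(I_{\mathcal{X}}\otimes\mathfrak{z}^{*})\Vert\le\Vert I_{\mathcal{X}}\otimes\mathfrak{z}^{*}\Vert=\Vert\mathfrak{z}^{*\mathcal{X}}\Vert$. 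Thus the map is isometric, and in particular injective.

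For surjectivity I would run the construction backwards. The dual module $\widetilde{\mathcal{X}}$ implements a Morita equivalence of $(F,N)$ with $(E,M)$, the isomorphism $W$ induces a correspondence isomorphism $W'\colon F\otimes_{N}\widetilde{\mathcal{X}}\to\widetilde{\mathcal{X}}\otimes_{M}E$, and $(\sigma^{\mathcal{X}})^{\widetilde{\mathcal{X}}}\cong\sigma$ via $\widetilde{\mathcal{X}}\otimes_{M}\mathcal{X}\cong N$. The same formula then defines $\mathfrak{w}^{*}\mapsto\mathfrak{w}^{*\widetilde{\mathcal{X}}}$ from $\overline{\mathbb{D}(E^{\sigma^{\mathcal{X}}*})}$ into $\overline{\mathbb{D}(F^{\sigma*})}$, and I would check that the two assignments are mutually inverse: composing them replaces $W$ and $W'$ by their composite, which collapses under $\widetilde{\mathcal{X}}\otimes_{M}\mathcal{X}\cong N$ (resp. $\mathcal{X}\otimes_{N}\widetilde{\mathcal{X}}\cong M$) to the identity, while $I_{\widetilde{\mathcal{X}}}\otimes I_{\mathcal{X}}\otimes\mathfrak{z}^{*}$ collapses to $\mathfrak{z}^{*}$ exactly as above. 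I expect the main obstacle to be precisely this coherence check: verifying that the canonical associativity and inverse-bimodule isomorphisms are compatible with $W$ and $W'$ so that the round trip is genuinely the identity, not merely some isomorphism; the remaining steps are routine bookkeeping with Rieffel induction.
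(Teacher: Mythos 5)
Your proposal is correct, and it reaches the conclusion by a genuinely different route from the paper, chiefly in how the isometry and the surjectivity are obtained. The paper packages the intertwining condition into a $2\times2$ matrix trick: it realizes $\mathfrak{z}^{*}$ as the off-diagonal corner of an operator on $H\oplus(F\otimes_{\sigma}H)$ commuting with $\sigma\oplus(\sigma^{F}\circ\varphi_{N})$, induces this whole picture up via $\mathcal{X}$, and then invokes Rieffel's theorem that the commutant of an induced representation is exactly $I_{\mathcal{X}}\otimes(\text{commutant})$; that single citation delivers membership in $\overline{\mathbb{D}(E^{\sigma^{\mathcal{X}}*})}$, the norm equality, and surjectivity in one stroke. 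You instead argue directly: you check the intertwining by pushing $\varphi_{M}(a)$ through $W$ and through $I_{\mathcal{X}}\otimes\mathfrak{z}^{*}$ leg by leg, you obtain the lower norm bound by tensoring with $I_{\widetilde{\mathcal{X}}}$ and collapsing $\widetilde{\mathcal{X}}\otimes_{M}\mathcal{X}\simeq N$ (thereby supplying a detail the paper asserts without proof), and you get surjectivity from an explicit inverse transform built from $\widetilde{\mathcal{X}}$ and the transported isomorphism $W'$. What the paper's route buys is brevity: Rieffel's Theorem 6.23 absorbs precisely the coherence bookkeeping you rightly flag as the delicate point. What your route buys is an explicit inverse formula and independence from the commutant theorem; the costs are that you need $\sigma$ (hence $\sigma^{\mathcal{X}}$) nondegenerate so that $N\otimes_{\sigma}H\simeq H$, and that $W'$ must be \emph{defined} as the transport of $W^{-1}$ under the two collapsing isomorphisms --- not taken to be an arbitrary isomorphism $F\otimes_{N}\widetilde{\mathcal{X}}\to\widetilde{\mathcal{X}}\otimes_{M}E$ --- so that the round trip is the identity rather than merely an automorphism. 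You have identified exactly this as the point needing care, and with that choice of $W'$ the check does go through.
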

\begin{proof}
For $\mathfrak{z}^{*}\in\overline{\mathbb{D}(F^{\sigma*})}$ set $\mathfrak{z}_{1}^{*}:=\left[\begin{array}{cc}
0 & \mathfrak{z}^{*}\\
0 & 0\end{array}\right]$ acting on $H\oplus(F\otimes_{\sigma}H)$. Then $\mathfrak{z}_{1}^{*}$
commutes with $\left[\begin{array}{cc}
\sigma & 0\\
0 & \sigma^{F}\circ\varphi_{N}\end{array}\right]$. Consequently, $I_{\mathcal{X}}\otimes\mathfrak{z}_{1}^{*}=\left[\begin{array}{cc}
0 & I_{\mathcal{X}}\otimes\mathfrak{z}^{*}\\
0 & 0\end{array}\right]$ acting on $\mathcal{X}\otimes_{\sigma}H\oplus\mathcal{X}\otimes_{\sigma^{F}\circ\varphi_{N}}(F\otimes_{\sigma}H)$
commutes with $\left[\begin{array}{cc}
\sigma^{\mathcal{X}} & 0\\
0 & (\sigma^{F}\circ\varphi_{N})^{\mathcal{X}}\end{array}\right]$. Since $W\otimes I_{H}:E\otimes\mathcal{X}\otimes_{\sigma}H\to\mathcal{X}\otimes F\otimes_{\sigma}H$
intertwines $(\sigma^{\mathcal{X}})^{E}\circ\varphi_{M}$ and $(\sigma^{F}\circ\varphi_{N})^{\mathcal{X}}$
by hypothesis, we see that $\left[\begin{array}{cc}
0 & (I_{\mathcal{X}}\otimes\mathfrak{z}^{*})(W\otimes I_{H})\\
0 & 0\end{array}\right]$ commutes with $\left[\begin{array}{cc}
\sigma^{\mathcal{X}} & 0\\
0 & (\sigma^{\mathcal{X}})^{E}\circ\varphi_{M}\end{array}\right]$. Since $\Vert(I_{\mathcal{X}}\otimes\mathfrak{z}^{*})(W\otimes I_{H})\Vert=\Vert\mathfrak{z}^{*}\Vert$,
it follows that $\mathfrak{z}^{*\mathcal{X}}:=(I_{\mathcal{X}}\otimes\mathfrak{z}^{*})(W\otimes I_{H})$
lies in $\overline{\mathbb{D}(E^{\sigma^{\mathcal{X}}*})}$ and that
the map $\mathfrak{z}^{*}\to\mathfrak{z}^{*\mathcal{X}}$ is isometric.
Finally, to see that the map is surjective, we appeal to \cite[Theorem 6.23]{mR74b}:
Let $\mathfrak{w}^{*}\in\overline{\mathbb{D}(E^{\sigma^{\mathcal{X}}*})}$.
Then $\mathfrak{w}^{*}$ intertwines $\sigma^{\mathcal{X}}$ and $(\sigma^{\mathcal{X}})^{E}\circ\varphi_{M}$
by hypothesis. Consequently, $\mathfrak{w}^{*}(W\otimes I_{H})^{-1}$
intertwines $\sigma^{\mathcal{X}}$ and $(\sigma^{F}\circ\varphi_{N})^{\mathcal{X}}$.
That is $\left[\begin{array}{cc}
0 & \mathfrak{w}^{*}(W\otimes I_{H})^{-1}\\
0 & 0\end{array}\right]$ lies in the commutant of $\left[\begin{array}{cc}
\sigma^{\mathcal{X}}(\mathcal{L}(\mathcal{X})) & 0\\
0 & (\sigma^{F}\circ\varphi_{N})^{\mathcal{X}}(\mathcal{L}(\mathcal{X}))\end{array}\right]=\left[\begin{array}{cc}
\sigma & 0\\
0 & (\sigma^{F}\circ\varphi_{N})\end{array}\right]^{\mathcal{X}}(\mathcal{L}(\mathcal{X}))$ and so, by \cite[Theorem 6.23]{mR74b}, $\left[\begin{array}{cc}
0 & \mathfrak{w}^{*}(W\otimes I_{H})^{-1}\\
0 & 0\end{array}\right]$ must have the form $I_{\mathcal{X}}\otimes\left[\begin{array}{cc}
\mathfrak{z}_{11} & \mathfrak{z}_{12}\\
\mathfrak{z}_{21} & \mathfrak{z}_{22}\end{array}\right]$, where $\left[\begin{array}{cc}
\mathfrak{z}_{11} & \mathfrak{z}_{12}\\
\mathfrak{z}_{21} & \mathfrak{z}_{22}\end{array}\right]$ lies in the commutant of $\left[\begin{array}{cc}
\sigma & 0\\
0 & (\sigma^{F}\circ\varphi_{N})\end{array}\right]$. Since $I_{\mathcal{X}}\otimes\left[\begin{array}{cc}
\mathfrak{z}_{11} & \mathfrak{z}_{12}\\
\mathfrak{z}_{21} & \mathfrak{z}_{22}\end{array}\right]$ maps
$\mathcal{X}\otimes_{N}(F\otimes_{\sigma}H)$ to $\mathcal{X}\otimes_{\sigma}H$
and is zero on $\mathcal{X}\otimes_{\sigma}H$, it follows that $\left[\begin{array}{cc}
\mathfrak{z}_{11} & \mathfrak{z}_{12}\\
\mathfrak{z}_{21} & \mathfrak{z}_{22}\end{array}\right]=\left[\begin{array}{cc}
0 & \mathfrak{z}_{12}\\
0 & 0\end{array}\right]$ for $\mathfrak{z}_{12}\in\overline{\mathbb{D}(F^{\sigma*})}$, proving
that the map $\mathfrak{z}^{*}\to\mathfrak{z}^{*\mathcal{X}}$ is
surjective. \end{proof}
\begin{defn}
If $(E,M)$ and $(F,N)$ are Morita equivalent $C^{*}$-correspondence
pairs via an equivalence $M,N$-bimodule $\mathcal{X}$, then the
map $(T,\sigma)\to(T^{\mathcal{X}},\sigma^{\mathcal{X}})$ from the
representation theory of $\mathcal{T}_{+}(E)$ to the representation
theory of $\mathcal{T}_{+}(F)$ defined by $\mathcal{X}$ will be
called the \emph{Morita transform} determined by $\mathcal{X}$. 
\end{defn}
We like to think of the Morita transform as a generalized conformal
map.

\section{Morita Equivalence and Absolute Continuity}

Our focus in this section will be on Morita equivalence in the context
of $W^{*}$-algebras and $W^{*}$-correspondences. As we noted above,
a $W^{*}$-correspondence is a $C^{*}$-correspondence with additional
structure. We begin by highlighting what the additional structure
is and how to deal with it. So, throughout this section $M$ and $N$
will be $W^{*}$-algebras and $E$ (resp. $F$) will be a $W^{*}$-correspondence
over $M$ (resp. $N$). This means, in particular, that $E$ and $F$
are \emph{self-dual} Hilbert $C^{*}$-modules over $M$ and $N$,
respectively, in the sense of Paschke \cite[Section 3, p. 449]{wP73},
and that the left actions of $M$ and $N$ are given by \emph{normal}
representations, $\varphi_{M}$ and $\varphi_{N}$ of $M$ and $N$
into $\mathcal{L}(E)$ and $\mathcal{L}(F)$, respectively. (Recall
that Paschke showed that in the setting of self-dual Hilbert modules
over $W^{*}$-algebras, every continuous module map is adjointable
and $\mathcal{L}(E)$ is a $W^{*}$-algebra by \cite[Corollary 3.5 and Proposition 3.10]{wP73}.)
To avoid technical distractions, we assume that $\varphi_{M}$ and
$\varphi_{N}$ are faithful and unital. 

A key role in this theory is played by Paschke's Theorem 3.2 in \cite{wP73},
which says among other things that any Hilbert $C^{*}$-module $E$
over a $W^{*}$-algebra has a canonical embedding into a self-dual
Hilbert module over the algebra, which should be viewed as a canonical
completion of $E$. This allows us to perform $C^{*}$-algebraic constructions
and pass immediately to the completions to obtain $W^{*}$-algebraic
objects. For instance, if $E$ is a Hilbert $W^{*}$-module over $M$,
then we may form the $C^{*}$-tensor square, $E^{\otimes2}=E\otimes_{M}E$,
which is not, in general, a $W^{*}$-correspondence over $M$. However,
its self-dual completion is. More generally, we can form the \emph{$C^{*}$-Fock
space} built from $(E,M$), $\mathcal{F}_{c}(E)$, as we did at the
outset of this note. Then we let $\mathcal{F}(E)$ be the self-dual
completion of $\mathcal{F}_{c}(E)$ in the sense of \cite[Theorem 3.2]{wP73},
and call $\mathcal{F}(E)$ \emph{the Fock space of the $W^{*}$-correspondence
$E$.} Similarly, we form $\mathcal{F}_{c}(F)$ and $\mathcal{F}(F)$.
We write $\varphi_{M\infty}$ for the left action of $M$ on both
$\mathcal{F}_{c}(E)$ and on $\mathcal{F}(E)$. This should cause
no confusion, since every element of $\mathcal{L}(\mathcal{F}_{c}(E))$
has a unique extension to an element of $\mathcal{L}(\mathcal{F}(E))$,
by \cite[Corollary 3.7]{wP73}, and the process of mapping each element
in $\mathcal{L}(\mathcal{F}_{c}(E))$ to its extension in $\mathcal{L}(\mathcal{F}(E))$
gives an isometric embedding of $\mathcal{L}(\mathcal{F}_{c}(E))$
in $\mathcal{L}(\mathcal{F}(E))$. Likewise, $\varphi_{N\infty}$
denotes the left action of $N$ on both $\mathcal{F}_{c}(F)$ and
$\mathcal{F}(F)$. The creation operator $T_{\xi}$ on $\mathcal{F}_{c}(E)$
determined by $\xi\in E$, therefore has a unique extension to $\mathcal{F}(E)$
and we do not distinguish notationally between the original and the
extension. But in the $W^{*}$-setting we let $\mathcal{T}_{+}(E)$
denote the norm closed subalgebra of $\mathcal{L}(\mathcal{F}(E))$
generated by $\varphi_{M\infty}(M)$ and $\{T_{\xi}\mid\xi\in E\}$,
and we call $\mathcal{T}_{+}(E)$ \emph{the tensor algebra of} $E$
or of $(E,M)$. That is, we focus on the tensor algebra as living
on the $W^{*}$-Fock space $\mathcal{F}(E)$. We view $\mathcal{T}_{+}(F)$
similarly. Finally, we let $H^{\infty}(E)$ denote the ultra-weak
closure of $\mathcal{T}_{+}(E)$ in $\mathcal{L}(\mathcal{F}(E))$,
and we let $H^{\infty}(F)$ denote the ultra-weak closure of $\mathcal{T}_{+}(F)$
in $\mathcal{L}(\mathcal{F}(F))$. The algebras $H^{\infty}(E)$ and
$H^{\infty}(F)$ are called the \emph{Hardy algebras} of $E$ and
$F$, respectively. 

In the special case when $M=\mathbb{C}=E$, we see that $\mathcal{F}_{c}(E)=\mathcal{F}(E)=\ell^{2}(\mathbb{N})$,
$\mathcal{T}_{+}(E)$ is the disc algebra $A(\mathbb{D})$ and $H^{\infty}(E)=H^{\infty}(\mathbb{T})$.
More generally, when $M=\mathbb{C}$ and $E=\mathbb{C}^{d}$, $\mathcal{T}_{+}(E)$
is Popescu's noncommutative disc algebra and $H^{\infty}(E)$ is his
noncommutative Hardy algebra \cite{gP91}. Somewhat later, Davidson and Pitts 
studied $H^{\infty}(\mathbb{C}^d)$ under the name
\emph{noncommutative analytic Toeplitz algebra} \cite{DP98}.

\begin{defn}
\label{def:Morita-equi-Wstar-correspondence-1}If $M$ and $N$ are
$W^{*}$-algebras and if $E$ and $F$ are $W^{*}$-correspondences
over $M$ and $N$, respectively, we say that $(E,M)$ and $(F,N)$
are \emph{Morita equivalent} in case there is a self-dual $M-N$ equivalence
bimodule $\mathcal{X}$ in the sense of \cite[Definition 7.5]{mR74a}
such that \[
\mathcal{X}\otimes_{N}F\simeq E\otimes_{M}\mathcal{X}\]
as $W^{*}$-correspondences. In this case, we say that $\mathcal{X}$
\emph{implements} a Morita equivalence between $(E,M)$ and $(F,N)$. 
\end{defn}
We emphasize that the modules $\mathcal{X}\otimes_{N}F$ and $E\otimes_{M}\mathcal{X}$
are self-dual completions of the balanced $C^{*}$-tensor products.
A completely contractive representation of a $W^{*}$-correspondence
pair $(E,M)$ on a Hilbert space $H$ is a pair $(T,\sigma)$ where
$\sigma$ is a normal representation of $M$ on $H$ and where $T$
is an ultra-weakly continuous, completely contractive bimodule map
from $E$ to $B(H)$. However, as we noted in \cite[Remark 2.6]{MSHardy},
the ultra-weak continuity of $T$ follows automatically from the bimodule
property of $T$ and the normality of $\sigma$.

Our goal is to show that Morita equivalence in the sense of Definition
\ref{def:Morita-equi-Wstar-correspondence-1} preserves absolute continuity
in the sense of the following definition, which was inspired by the important
paper of Davidson, Li and Pitts \cite{DLP2005}. 
\begin{defn}
\label{def:absolute continuity}Let $(T,\sigma)$ be a completely
contractive covariant representation of $(E,M)$ on $H$ and assume
that $\sigma$ is a normal representation of $M$. Then a vector $x\in H$
is called \emph{absolutely continuous} if and only if the functional
$a\to\langle(T\times\sigma)(a)x,x\rangle,\,\, a\in\mathcal{T}_{+}(E)$,
extends to an ultra-weakly continuous linear functional on $H^{\infty}(E)$.
The collection of all absolutely continuous vectors in $H$ is denoted
$\mathcal{V}_{ac}(T,\sigma)$, and we say $(T,\sigma)$ and $T\times\sigma$
are absolutely continuous in case $\mathcal{V}_{ac}(T,\sigma)=H$.\end{defn}
\begin{rem}
The definition of an absolutely continuous vector just given is not quite
the one given in \cite[Definition 3.1]{MS2010}. However, by \cite[Remark 3.2]{MS2010},
it is equivalent to the one given there. Also, by virtue of \cite[Theorem 4.11]{MS2010},
$T\times\sigma$ extends to an ultra-weakly continuous completely
contractive representation of $H^{\infty}(E)$ if and only $T\times\sigma$
is absolutely continuous. \end{rem}
\begin{thm}
\label{thm:Absolute_continuity_preservation}Suppose that $(E,M)$
and $(F,N)$ are $W^{*}$-correspondence pairs that are Morita equivalent
via an equivalence bimodule $\mathcal{X}$. If $(\mathfrak{z}^{*},\sigma)$
is a completely contractive covariant representation of $(F,N)$,
where $\sigma$ is normal, then \begin{equation}
\mathcal{X}\otimes_{\sigma}\mathcal{V}_{ac}(\mathfrak{z}^{*},\sigma)=\mathcal{V}_{ac}(\mathfrak{z}^{\mathcal{X}^{*}},\sigma^{\mathcal{X}}).\label{eq:abs_cont_subspace}\end{equation}
 In particular, $(\mathfrak{z}^{*},\sigma)$ is absolutely continuous
if and only if $(\mathfrak{z}^{\mathcal{X}^{*}},\sigma^{\mathcal{X}})$
is absolutely continuous. 
\end{thm}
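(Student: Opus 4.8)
The plan is to transport the defining condition of absolute continuity across the isometric bijection of Theorem \ref{thm:functor} by first rendering the relationship between the two integrated forms completely explicit at the level of generators, and then upgrading that relationship to the ultra-weak topology so that the characterization in the Remark following Definition \ref{def:absolute continuity}---a covariant pair is absolutely continuous exactly when its integrated form extends to an ultra-weakly continuous completely contractive representation of the Hardy algebra---can be applied on both sides. Write $\rho:=\mathfrak{z}^{*}\times\sigma$ for the representation of $\mathcal{T}_{+}(F)$ on $H$ and $\rho^{\mathcal{X}}:=\mathfrak{z}^{\mathcal{X}*}\times\sigma^{\mathcal{X}}$ for its Morita transform, a representation of $\mathcal{T}_{+}(E)$ on $\mathcal{X}\otimes_{\sigma}H$. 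First I would record the action on generators: for $b\in M$ one has $\rho^{\mathcal{X}}(\varphi_{M\infty}(b))(\zeta\otimes h)=(b\zeta)\otimes h$, while for $\xi\in E$, writing $W(\xi\otimes\zeta)=\sum_{i}\zeta_{i}\otimes\eta_{i}$ in $\mathcal{X}\otimes_{N}F$, the formula $\mathfrak{z}^{\mathcal{X}*}=(I_{\mathcal{X}}\otimes\mathfrak{z}^{*})(W\otimes I_{H})$ gives $\rho^{\mathcal{X}}(T_{\xi})(\zeta\otimes h)=\sum_{i}\zeta_{i}\otimes\rho(T_{\eta_{i}})h$. Iterating over words shows that every matrix coefficient $\langle\rho^{\mathcal{X}}(c)(\zeta\otimes x),\zeta'\otimes x'\rangle$ with $c\in\mathcal{T}_{+}(E)$ is a finite sum of coefficients $\langle\rho(b)x,x''\rangle$ of $\rho$, with $b\in\mathcal{T}_{+}(F)$ and $x''\in H$ determined by $c$, $\zeta$, $\zeta'$ and $W$. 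This is the concrete incarnation of the Morita equivalence of the tensor algebras recalled in the introduction.

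The second step is to promote this algebraic intertwining to the Hardy algebras. The isomorphism $W$ extends, level by level, to an isomorphism of self-dual $W^{*}$-correspondences $\mathcal{F}(E)\otimes_{M}\mathcal{X}\cong\mathcal{X}\otimes_{N}\mathcal{F}(F)$, and I would verify that the implementing unitary realizes the Morita equivalence of $H^{\infty}(E)$ and $H^{\infty}(F)$ as a normal, weak-$*$ bicontinuous equivalence of the two Hardy algebras, viewed as dual operator algebras. Choosing a quasi-basis for the self-dual module $\mathcal{X}$ makes this concrete: it exhibits $\rho^{\mathcal{X}}$ as a $W$-twisted matrix amplification of $\rho$, so that the finitely many coefficients of the previous paragraph assemble into a genuinely ultra-weakly continuous dependence. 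Granting this, if $\rho$ is absolutely continuous then its ultra-weakly continuous extension to $H^{\infty}(F)$ may be carried through $\mathcal{X}$ to an ultra-weakly continuous completely contractive representation of $H^{\infty}(E)$ on $\mathcal{X}\otimes_{\sigma}H$ extending $\rho^{\mathcal{X}}$; the same normality, applied to the matrix-coefficient formula, shows vectorwise that $c\mapsto\langle\rho^{\mathcal{X}}(c)(\zeta\otimes x),\zeta\otimes x\rangle$ extends ultra-weakly continuously whenever $x\in\mathcal{V}_{ac}(\mathfrak{z}^{*},\sigma)$. Since $\mathcal{V}_{ac}$ is a closed subspace, this yields the inclusion $\mathcal{X}\otimes_{\sigma}\mathcal{V}_{ac}(\mathfrak{z}^{*},\sigma)\subseteq\mathcal{V}_{ac}(\mathfrak{z}^{\mathcal{X}*},\sigma^{\mathcal{X}})$.

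For the reverse inclusion I would use the symmetry of Morita equivalence. The dual bimodule $\widetilde{\mathcal{X}}$ implements a Morita equivalence of $(F,N)$ with $(E,M)$, and by Theorem \ref{thm:functor} together with the identifications $\widetilde{\mathcal{X}}\otimes_{M}(\mathcal{X}\otimes_{\sigma}H)\cong(\widetilde{\mathcal{X}}\otimes_{M}\mathcal{X})\otimes_{\sigma}H\cong H$ its transform inverts the $\mathcal{X}$-transform, carrying $(\mathfrak{z}^{\mathcal{X}*},\sigma^{\mathcal{X}})$ back to $(\mathfrak{z}^{*},\sigma)$. Applying the inclusion already proved to this $\widetilde{\mathcal{X}}$-transform gives $\widetilde{\mathcal{X}}\otimes_{\sigma^{\mathcal{X}}}\mathcal{V}_{ac}(\mathfrak{z}^{\mathcal{X}*},\sigma^{\mathcal{X}})\subseteq\mathcal{V}_{ac}(\mathfrak{z}^{*},\sigma)$; tensoring with $\mathcal{X}$ and using $\mathcal{X}\otimes_{N}\widetilde{\mathcal{X}}\cong M$ produces the missing containment and hence the equality \eqref{eq:abs_cont_subspace}, the final assertion being the special case $\mathcal{V}_{ac}=H$. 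The main obstacle is the middle step: upgrading the $C^{*}$-level, purely algebraic intertwining of the first paragraph to a normal equivalence at the Hardy-algebra level. This is precisely where self-duality of $\mathcal{X}$ and Paschke's theory of $W^{*}$-modules are indispensable, since one must know that passing to self-dual completions and conjugating by the unitary from $\mathcal{F}(E)\otimes_{M}\mathcal{X}\cong\mathcal{X}\otimes_{N}\mathcal{F}(F)$ are weak-$*$ continuous operations that respect the two Hardy-algebra structures.
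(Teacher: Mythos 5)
Your outline---push the ultra-weak extension property across the equivalence directly, then use $\widetilde{\mathcal{X}}$ and the symmetry of Morita equivalence for the reverse inclusion---is plausible in shape, and the symmetry trick at the end is sound. But the middle step, which you yourself flag as ``the main obstacle,'' is a genuine gap rather than a deferred verification, and it is where all the difficulty lives. Two concrete problems. First, in the $W^{*}$-setting $\mathcal{X}\otimes_{N}F$ is a \emph{self-dual completion}, so $W(\xi\otimes\zeta)$ is in general not a finite sum of elementary tensors, nor even a norm limit of such; your ``finite sum of coefficients $\langle\rho(b)x,x''\rangle$'' is really an infinite, only ultra-weakly convergent, sum, and an ultra-weak limit of ultra-weakly continuous functionals need not be ultra-weakly continuous. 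Controlling this is exactly the normal, weak-$*$ bicontinuous equivalence of $H^{\infty}(E)$ and $H^{\infty}(F)$ that you say you ``would verify''; that is a substantive theorem, not an observation, and self-dual $W^{*}$-modules do not admit finite quasi-bases in general. Second, even granting it, your vectorwise argument needs $b\mapsto\langle\rho(b)x,x''\rangle$ to extend ultra-weakly for $x\in\mathcal{V}_{ac}(\mathfrak{z}^{*},\sigma)$ and \emph{arbitrary} $x''\in H$, which requires invariance of $\mathcal{V}_{ac}$ under the representation and a polarization argument you do not supply.

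The paper avoids all of this by changing the characterization of $\mathcal{V}_{ac}$. By \cite[Theorem 4.7]{MS2010}, $\mathcal{V}_{ac}(\mathfrak{z}^{*},\sigma)$ is the closed span of the ranges of the pure superharmonic operators for the completely positive map $\Phi_{\mathfrak{z}}(a)=\mathfrak{z}^{*}(I\otimes a)\mathfrak{z}$ on the commutant $\sigma(N)'$. Lemma \ref{lem:CP-induced} shows $\Phi_{\mathfrak{z}^{\mathcal{X}}}=I_{\mathcal{X}}\otimes\Phi_{\mathfrak{z}}$, and Rieffel's commutant theorem gives $\sigma^{\mathcal{X}}(M)'=I_{\mathcal{X}}\otimes\sigma(N)'$, so the pure superharmonic operators on the transformed side are precisely the operators $I_{\mathcal{X}}\otimes a$ with $a$ pure superharmonic for $\Phi_{\mathfrak{z}}$, and the range projection of $I_{\mathcal{X}}\otimes a$ is $I_{\mathcal{X}}\otimes P$. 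The whole question is thus moved into the commutant, where the Morita transform acts by the transparent map $a\mapsto I_{\mathcal{X}}\otimes a$, and no continuity statement about the Hardy-algebra equivalence is needed. To complete your route you would have to prove the normal equivalence of the Hardy algebras you invoke; the shorter path is to switch to the superharmonic characterization.
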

The proof of this theorem rests on a calculation of independent interest.
Recall that each $\mathfrak{z}\in\overline{\mathbb{D}(F^{\sigma})}$
determines a completely positive map $\Phi_{\mathfrak{z}}$ on $\sigma(N)'$
via the formula\[
\Phi_{\mathfrak{z}}(a):=\mathfrak{z}^{*}(I_{E}\otimes a)\mathfrak{z},\qquad a\in\sigma(N)'.\]
Recall, also, that the commutant of $\sigma^{\mathcal{X}}(M)$ is
$I_{\mathcal{X}}\otimes\sigma(N)'$, by \cite[Theorem 6.23]{mR74b}.
\begin{lem}
\label{lem:CP-induced}With the notation as in Theorem \ref{thm:Absolute_continuity_preservation},\[
\Phi_{\mathfrak{z}{}^{\mathcal{X}}}=I_{\mathcal{X}}\otimes\Phi_{\mathfrak{z}},\]
i.e., for all $a\in\sigma(N)'$, $\Phi_{\mathfrak{z}{}^{\mathcal{X}}}(I_{\mathcal{X}}\otimes a)=I_{\mathcal{X}}\otimes\Phi_{\mathfrak{z}}(a)$.\end{lem}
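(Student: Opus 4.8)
The plan is to reduce the identity to a direct computation, using the explicit formula for the Morita transform from Theorem \ref{thm:functor} together with the facts that $W$ is a unitary correspondence isomorphism and that $a\in\sigma(N)'$ acts only on the Hilbert-space factor $H$. First I would record the adjoint form of the transform: since $\mathfrak{z}^{*\mathcal{X}}=(I_{\mathcal{X}}\otimes\mathfrak{z}^{*})(W\otimes I_{H})$ and $\mathfrak{z}^{\mathcal{X}}=(\mathfrak{z}^{*\mathcal{X}})^{*}$, taking adjoints gives $\mathfrak{z}^{\mathcal{X}}=(W^{*}\otimes I_{H})(I_{\mathcal{X}}\otimes\mathfrak{z})$, where $\mathfrak{z}:H\to F\otimes_{\sigma}H$ and $I_{\mathcal{X}}\otimes\mathfrak{z}:\mathcal{X}\otimes_{\sigma}H\to\mathcal{X}\otimes_{N}(F\otimes_{\sigma}H)$.

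Next, I would fix $a\in\sigma(N)'$ and evaluate $\Phi_{\mathfrak{z}^{\mathcal{X}}}$ on the generic element $I_{\mathcal{X}}\otimes a$ of $\sigma^{\mathcal{X}}(M)'=I_{\mathcal{X}}\otimes\sigma(N)'$. By definition,
\[
\Phi_{\mathfrak{z}^{\mathcal{X}}}(I_{\mathcal{X}}\otimes a)=(\mathfrak{z}^{\mathcal{X}})^{*}\bigl(I_{E}\otimes(I_{\mathcal{X}}\otimes a)\bigr)\mathfrak{z}^{\mathcal{X}},
\]
where, under the canonical associativity identification $E\otimes_{\sigma^{\mathcal{X}}}(\mathcal{X}\otimes_{\sigma}H)\cong(E\otimes_{M}\mathcal{X})\otimes_{\sigma}H$, the middle operator becomes $I_{E\otimes\mathcal{X}}\otimes a$. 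Substituting the formulas for $\mathfrak{z}^{\mathcal{X}}$ and its adjoint, I obtain
\[
(I_{\mathcal{X}}\otimes\mathfrak{z}^{*})(W\otimes I_{H})(I_{E\otimes\mathcal{X}}\otimes a)(W^{*}\otimes I_{H})(I_{\mathcal{X}}\otimes\mathfrak{z}).
\]

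The crux is then to move $I_{E\otimes\mathcal{X}}\otimes a$ past $W\otimes I_{H}$. Since $a$ acts only on $H$ while $W$ acts on the correspondence variables, one has $(W\otimes I_{H})(I_{E\otimes\mathcal{X}}\otimes a)=(I_{\mathcal{X}\otimes F}\otimes a)(W\otimes I_{H})$; the well-definedness of $I\otimes a$ on the balanced tensor products here is exactly where the hypothesis $a\in\sigma(N)'$ is used. Combined with the unitarity $(W\otimes I_{H})(W^{*}\otimes I_{H})=I$, the displayed expression collapses to
\[
(I_{\mathcal{X}}\otimes\mathfrak{z}^{*})\bigl(I_{\mathcal{X}}\otimes(I_{F}\otimes a)\bigr)(I_{\mathcal{X}}\otimes\mathfrak{z})=I_{\mathcal{X}}\otimes\bigl(\mathfrak{z}^{*}(I_{F}\otimes a)\mathfrak{z}\bigr)=I_{\mathcal{X}}\otimes\Phi_{\mathfrak{z}}(a),
\]
using that $I_{\mathcal{X}}\otimes(-)$ respects composition of operators. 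This is precisely the asserted identity.

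The step I expect to require the most care is the bookkeeping of the tensor-product identifications: verifying that $I_{E}\otimes(I_{\mathcal{X}}\otimes a)$ really corresponds to $I_{E\otimes\mathcal{X}}\otimes a$ under the associativity isomorphism $E\otimes_{\sigma^{\mathcal{X}}}(\mathcal{X}\otimes_{\sigma}H)\cong(E\otimes_{M}\mathcal{X})\otimes_{\sigma}H$, and that the commutation of $I\otimes a$ with $W\otimes I_{H}$ is legitimate on the balanced modules rather than merely on algebraic tensors. Once these identifications are pinned down, the remainder is the formal manipulation above.
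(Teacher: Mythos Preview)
Your argument is correct and follows essentially the same route as the paper's proof: both compute $\Phi_{\mathfrak{z}^{\mathcal{X}}}(I_{\mathcal{X}}\otimes a)$ by substituting $\mathfrak{z}^{\mathcal{X}}=(W\otimes I_{H})^{*}(I_{\mathcal{X}}\otimes\mathfrak{z})$, identify $I_{E}\otimes(I_{\mathcal{X}}\otimes a)$ with $I_{E\otimes\mathcal{X}}\otimes a$, pass $W\otimes I_{H}$ past this operator to obtain $I_{\mathcal{X}\otimes F}\otimes a$, and then factor out $I_{\mathcal{X}}$. Your additional remarks on the associativity identifications and the role of $a\in\sigma(N)'$ simply make explicit what the paper leaves implicit.
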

\begin{proof}
By Theorem \ref{thm:functor}, $\mathfrak{z}^{\mathcal{X}}=(W\otimes I_{H})^{*}(I_{\mathcal{X}}\otimes\mathfrak{z})$.
Consequently, for all $a\in\sigma(N)'$, \begin{align*}
\Phi_{\mathfrak{z}{}^{\mathcal{X}}}(I_{\mathcal{X}}\otimes a) & =\mathfrak{z}^{\mathcal{X}^{*}}(I_{E}\otimes(I_{\mathcal{X}}\otimes a))\mathfrak{z}^{\mathcal{X}}\\
= & (I_{\mathcal{X}}\otimes\mathfrak{z}^{*})(W\otimes I_{H})(I_{E\otimes\mathcal{X}}\otimes a)(W\otimes I_{H})^{*}(I_{\mathcal{X}}\otimes\mathfrak{z})\\
= & (I_{\mathcal{X}}\otimes\mathfrak{z}^{*})(I_{\mathcal{X}\otimes F}\otimes a)(I_{\mathcal{X}}\otimes\mathfrak{z})\\
= & (I_{\mathcal{X}}\otimes\mathfrak{z}^{*})(I_{\mathcal{X}}\otimes(I_{F}\otimes a))(I_{\mathcal{X}}\otimes\mathfrak{z})\\
= & I_{\mathcal{X}}\otimes\Phi_{\mathfrak{z}}(a).\end{align*}

\end{proof}
For the proof of Theorem \ref{thm:Absolute_continuity_preservation},
we need one more ingredient:
\begin{defn}
\label{def:superharmonic_operator}Let $\Phi$ be a completely positive
map on a $W^{*}$-algebra $M$. A positive element $a\in M$ is called
\emph{superharmonic} with respect to $\Phi$ in case $\Phi(a)\leq a$.
A superharmonic element $a\in M$ is called a \emph{pure} superharmonic
element in case $\Phi^{n}(a)\to0$ ultra-strongly as $n\to\infty$. \end{defn}
\begin{proof}
(of Theorem \ref{thm:Absolute_continuity_preservation}) In \cite[Theorem 4.7]{MS2010},
we proved that absolutely continuous subspace for $(\mathfrak{z}^{*},\sigma)$
is the closed linear span of the ranges of all the pure superharmonic
operators for $\Phi_{\mathfrak{z}}$, i.e., the projection onto $\mathcal{V}_{ac}(\mathfrak{z}^{*},\sigma)$
is the supremum taken over all the projections $P$, where $P$ is
the projection onto the range of a pure superharmonic operator for
$\Phi_{\mathfrak{z}}$. From Lemma \ref{lem:CP-induced} we see that
$a\in N$ is a pure superharmonic operator for $\Phi_{\mathfrak{z}}$
if and only if $I_{\mathcal{X}}\otimes a$ is a pure superharmonic
operator for $\Phi_{\mathfrak{z}{}^{\mathcal{X}}}$. Since the range
projection of $I_{\mathcal{X}}\otimes a$ is $I_{\mathcal{X}}\otimes P$,
if $P$ is the range projection of $a$, the equation \eqref{eq:abs_cont_subspace}
is immediate.
\end{proof}

\section{Stabilization and Reconstruction}

We return to the $C^{*}$-setting, although everything we will say
has an analogue in the $W^{*}$-setting. So let $N$ be a $C^{*}$-algebra
and let $F$ be a $C^{*}$-correspondence over $N$. We are out to
identify a special pair $(E,M)$ that is Morita equivalent to $(F,N)$
and is a kind of stabilization of $(F,N)$. As we will see, $(E,M)$
will have a representation theory that is closely connected to Popescu's
reconstruction operator.

Form the Fock space over $F$, $\mathcal{F}(F)$, and let $M=\mathcal{K}(\mathcal{F}(F))$.
Also, let $P_{0}$ be the projection onto the sum $F\oplus F^{\otimes2}\oplus F^{\otimes3}\oplus\cdots$
in $\mathcal{F}(F)$. Then $P_{0}$ lies in $\mathcal{L}(\mathcal{F}(F))$,
which is the multiplier algebra of $M=\mathcal{K}(\mathcal{F}(F))$.
We set $E:=P_{0}\mathcal{K}(\mathcal{F}(F))$ and endow $E$ with
its obvious structure as a right Hilbert $C^{*}$-module over $\mathcal{K}(\mathcal{F}(F))$.
Note that $\mathcal{L}(E)=P_{0}\mathcal{L}(\mathcal{F}(F))P_{0}$.
Define $R:\mathcal{F}(F)\otimes F\to\mathcal{F}(F)$ by the formula
$R(\xi\otimes f)=\xi\otimes f$, where the first $\xi\otimes f$,
the argument of $R$, is viewed as an element in $\mathcal{F}(F)\otimes_{N}F$,
while the second $\xi\otimes f$, the image of $R(\xi\otimes f)$,
is viewed as an element of $\mathcal{F}(F)$. It appears that $R$
is the identity map. However, this is only because we have suppressed
the isomorphisms between $F^{\otimes n}\otimes F$ and $F^{\otimes(n+1)}$.
The map $R$ is adjointable, and its adjoint is given by the formulae
$R^{*}(a)=0$, if $a\in N$, viewed as the zero$^{\underline{th}}$
component of $\mathcal{F}(F)$, while $R^{*}(\xi_{1}\otimes\xi_{2}\otimes\xi_{3}\otimes\cdots\otimes\xi_{n})=(\xi_{1}\otimes\xi_{2}\otimes\cdots\otimes\xi_{n-1})\otimes\xi_{n}$,
if $n\geq1$ and $\xi_{1}\otimes\xi_{2}\otimes\xi_{3}\otimes\cdots\otimes\xi_{n}$
is a decomposable element of $F^{\otimes n}\subseteq\mathcal{F}(F)$.
In particular, $RR^{*}=P_{0}$. We define $\varphi_{M}:M\to\mathcal{L}(E)$
by the formula \[
\varphi_{M}(a):=R(a\otimes I_{F})R^{*},\qquad a\in M.\]
Observe that $\varphi_{M}$ extends naturally to the multiplier algebra
of $M$, which is $\mathcal{L}(\mathcal{F}(F))$ and $\varphi_{M}(I)=P_{0}$.
Consequently, $E$ is an essential left module over $M$.
\begin{prop}
\label{pro:Stable_Morita_equivalence}If $\mathcal{X}=\mathcal{F}(F)$,
then $\mathcal{X}$ is an equivalence bimodule between $M=\mathcal{K}(\mathcal{F}(F))$
and $N$ and the map $W$ from $E\otimes_{M}\mathcal{X}$ to $\mathcal{X}\otimes_{N}F$
defined by the formula\[
W(P_{0}a\otimes\xi)=R^{*}P_{0}a\xi,\qquad P_{0}a\otimes\xi\in E\otimes_{M}\mathcal{X},\]
is an isomorphism of $M,N$-correspondences. Consequently, $(E,M)$
and $(F,N)$ are Morita equivalent. \end{prop}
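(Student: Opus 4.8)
The plan is to dispose of the equivalence-bimodule assertion first and then concentrate the work on the map $W$. That $\mathcal{X}=\mathcal{F}(F)$ is an $M,N$-equivalence bimodule is the standard fact that a full right Hilbert $C^{*}$-module is an imprimitivity bimodule between its algebra of compact operators and its coefficient algebra: the left $M=\mathcal{K}(\mathcal{F}(F))$-valued inner product is $\theta_{x,y}$ (the rank-one operator $z\mapsto x\langle y,z\rangle_{N}$), fullness on the $M$-side is the definition of $\mathcal{K}(\mathcal{F}(F))$, and fullness on the $N$-side follows already from the zeroth summand, since $\langle N,N\rangle_{N}=N^{*}N$ is dense in $N$. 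The compatibility relation $\theta_{x,y}z=x\langle y,z\rangle_{N}$ is just the definition of $\theta_{x,y}$, so the hypotheses of \cite[Definition 7.5]{mR74a} are met.

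For $W$ I would isolate the three operator identities $R^{*}R=I$, $RR^{*}=P_{0}$, and hence $R^{*}P_{0}=R^{*}$, and reduce every verification to them. First, $W$ is well defined on $E\otimes_{M}\mathcal{X}$ because $R^{*}(P_{0}a)\xi$ depends only on the operator product $(P_{0}a)\xi$, so it respects the $M$-balancing $(P_{0}a)m\otimes\xi=P_{0}a\otimes m\xi$. Next, to see that $W$ is isometric I would compute, for $P_{0}a\otimes\xi$ and $P_{0}b\otimes\eta$, that the inner product of their images in $\mathcal{X}\otimes_{N}F$ is $\langle R^{*}P_{0}a\xi,R^{*}P_{0}b\eta\rangle_{N}=\langle P_{0}a\xi,RR^{*}P_{0}b\eta\rangle_{N}=\langle P_{0}a\xi,P_{0}b\eta\rangle_{N}=\langle\xi,(a^{*}P_{0}b)\eta\rangle_{N}$, using that $R$ is adjointable and that $P_{0}b\eta$ already lies in the range of $P_{0}$. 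The last quantity is precisely the inner product $\langle\xi,\langle P_{0}a,P_{0}b\rangle_{M}\,\eta\rangle_{N}$ of $P_{0}a\otimes\xi$ and $P_{0}b\otimes\eta$ in $E\otimes_{M}\mathcal{X}$, since $\langle P_{0}a,P_{0}b\rangle_{M}=a^{*}P_{0}b$.

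Being isometric, $W$ has closed range, so surjectivity reduces to density. Here I would use that $R^{*}$ is a coisometry of $\mathcal{F}(F)$ onto $\mathcal{F}(F)\otimes F$ restricting to an isometric isomorphism of $P_{0}\mathcal{F}(F)$ onto $\mathcal{F}(F)\otimes F$, together with nondegeneracy of the action of $\mathcal{K}(\mathcal{F}(F))$ on $\mathcal{F}(F)$: the vectors $P_{0}a\xi$ are dense in $P_{0}\mathcal{F}(F)$, so the images $R^{*}P_{0}a\xi$ are dense in $\mathcal{X}\otimes_{N}F$. For the bimodule property, right $N$-linearity follows from $R^{*}$ being an $N$-module map, while for the left $M$-action I would compute $W(\varphi_{M}(m)(P_{0}a)\otimes\xi)=R^{*}R(m\otimes I_{F})R^{*}P_{0}a\xi=(m\otimes I_{F})R^{*}P_{0}a\xi=(m\otimes I_{F})W(P_{0}a\otimes\xi)$, which is exactly $m\cdot W(P_{0}a\otimes\xi)$ once one recognizes that $m\in\mathcal{K}(\mathcal{F}(F))$ acts on $\mathcal{X}\otimes_{N}F=\mathcal{F}(F)\otimes F$ as $m\otimes I_{F}$. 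The Morita equivalence of $(E,M)$ and $(F,N)$ is then immediate from Definition \ref{def:Morita-equi-Wstar-correspondence}.

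I expect the only real friction to be bookkeeping: keeping straight which tensor leg carries the left $M$-action and which carries the right $N$-action on each side, and confirming that $W$ descends to the balanced tensor product. There is no deep obstacle; once the identities $R^{*}R=I$ and $RR^{*}=P_{0}$ are in hand, each of the four verifications collapses to a one-line computation.
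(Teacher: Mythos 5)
Your proposal is correct and follows essentially the same route as the paper's proof: the same isometry computation via $RR^{*}=P_{0}$, the same left-module verification reducing to $R^{*}R=I$, and the same density argument $R^{*}P_{0}\mathcal{K}(\mathcal{F}(F))\mathcal{F}(F)=R^{*}P_{0}\mathcal{F}(F)=\mathcal{F}(F)\otimes F$ for surjectivity. The only difference is that you spell out the imprimitivity-bimodule structure and the descent to the balanced tensor product, which the paper treats as standard.
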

\begin{proof}
By definition, $\mathcal{X}$ is an equivalence bimodule implementing
a Morita equivalence between $M$ and $N$. Also, it is clear that
$W$ is a right $N$-module map. To see that $W$ is a left $M$-module
map, it may be helpful to emphasize that the tensor product $E\otimes_{M}\mathcal{X}$
is balanced over $M$. So, if $P_{0}$ and $I$ were in $\mathcal{K}(\mathcal{F}(F))$
(which they aren't; they're only multipliers of $\mathcal{K}(\mathcal{F}(F))$),
then $P_{0}a\otimes\xi$ could be replaced by $P_{0}\otimes\xi$,
which in turn could be replaced by $I\otimes P_{0}\xi$. Further,
sending $I\otimes P_{0}\xi$ to $P_{0}\xi$ effects an isomorphism
between $E\otimes_{M}\mathcal{X}$ and $P_{0}\mathcal{F}(F)$. It
results that $W$ is effectively $R^{*}$. The following equation,
then gives the desired result.\begin{align*}
W\varphi_{M}(b)(P_{0}a\otimes\xi) & =W(R(b\otimes I_{F})R^{*})P_{0}a\otimes\xi\\
= & (b\otimes I_{F})R^{*}a\xi\\
= & (b\otimes I_{F})W(P_{0}a\otimes\xi).\end{align*}
The fact that $W$ is isometric is another easy computation: For all
$a,b\in M$, and $\xi,\eta\in F$, \begin{align*}
\langle P_{0}a\otimes\xi,P_{0}b\otimes\eta\rangle & =\langle\xi,a^{*}P_{0}b\eta\rangle\\
= & \langle P_{0}a\xi,P_{0}b\eta\rangle\\
= & \langle R^{*}a\xi,R^{*}b\eta\rangle\\
= & \langle W(P_{0}a\otimes\xi),W(P_{0}b\otimes\eta)\rangle.\end{align*}
(Note that we have used the fact that $P_{0}=RR^{*}$ when passing
from the second line to the third.) Since $\mathcal{K}(\mathcal{F}(F))\mathcal{F}(F)=\mathcal{F}(F)$,
$P_{0}\mathcal{K}(\mathcal{F}(F))\mathcal{F}(F)=P_{0}\mathcal{F}(F)$,
and so $R^{*}P_{0}\mathcal{K}(\mathcal{F}(F))\mathcal{F}(F)=R^{*}P_{0}\mathcal{F}(F)=\mathcal{F}(F)\otimes F$.
This shows that $W$ is surjective.\end{proof}
\begin{defn}
Given a $C^{*}$-correspondence pair $(F,N)$, we call the $C^{*}$-correspondence
pair $(E,M)=(P_{0}\mathcal{K}(\mathcal{F}(F)),\mathcal{K}(\mathcal{F}(F))$
constructed in Proposition \ref{pro:Stable_Morita_equivalence} the
\emph{canonical stabilization} of $(F,N)$, and we call $(\mathcal{F}(F),W)$
the \emph{canonical (E,M)$,$$(F,N)$-equivalence}. 
\end{defn}
We want to illustrate the calculations of Proposition \ref{pro:Stable_Morita_equivalence}
in a concrete setting first considered by Popescu. For this purpose,
we require two observations. First, recall that $E$ has the form
$PM$. In general, if $M$ is a $C^{*}$-algebra and if $E$ has the
form $PM$, where $P$ is a projection in the multiplier algebra of
$M$, then we called $(E,M)$ \emph{strictly cyclic} in \cite[Page 419]{MS98b}.
In this case, if $(T,\sigma)$ is a completely contractive covariant
representation of $(E,M)$ on a Hilbert space $H$, then $E\otimes_{\sigma}H$
is really $\sigma(P)H$, where we have extended $\sigma$ to the multiplier
algebra of $M$, if $M$ is not unital. Consequently, the intertwiner
$\widetilde{T}$ really maps the \emph{subspace} $\sigma(P)H$ into
$H$ but the adjoint of $\widetilde{T}$ may be viewed as an operator
\emph{on} $H$, i.e., from $H$ to $H$, with range contained in $\sigma(P)H$,
of course. Second, observe that in general, if $(T,\sigma)$ is a
covariant representation of $(F,N)$ on a Hilbert space $H$, then the
representation induced from the canonical equivalence is $(T^{\mathcal{F}(F)},\sigma^{\mathcal{F}(F)})$.
We know $\sigma^{\mathcal{F}(F)}$ represents $\mathcal{K}(\mathcal{F}(F))$
on $\mathcal{F}(F)\otimes_{\sigma}H$ via the ordinary action of $\mathcal{K}(\mathcal{F}(F))$
on $\mathcal{F}(F)$, tensored with the identity operator on $H$,
i.e., $\sigma^{\mathcal{F}(H)}(a)=a\otimes I_{H}$. On the other hand,
from Theorem \ref{thm:functor}, $\widetilde{T^{\mathcal{F}(F)}}=(I_{\mathcal{F}(F)}\otimes\widetilde{T})(W\otimes I_{H})$.
But as we noted in the proof of Proposition \ref{pro:Stable_Morita_equivalence},
$W$ is effectively $R^{*}$, and taking into account all the balancing
that is taking place, we may write $\widetilde{T^{\mathcal{F}(F)}}=(I_{\mathcal{F}(F)}\otimes\widetilde{T})(R^{*}\otimes I_{H})$.
Since, as we just remarked, $\widetilde{T^{\mathcal{F}(F)}}$ maps
from $E\otimes_{M}\mathcal{F}(F)\otimes_{\sigma}H=P_{0}\mathcal{K}(\mathcal{F}(F))\otimes_{\mathcal{K}(\mathcal{F}(F))}\mathcal{F}(F)\otimes_{\sigma}H$,
which can be identified with the subspace $P_{0}\mathcal{F}(F)\otimes_{\sigma}H$
of $\mathcal{F}(F)\otimes_{\sigma}H$, it will be more convenient
in the example below to work with the adjoint of $\widetilde{T^{\mathcal{F}(F)}}$,\begin{equation}
\left(\widetilde{T^{\mathcal{F}(F)}}\right)^{*}=(R\otimes I_{H})(I_{\mathcal{F}(F)}\otimes\widetilde{T}^{*}),\label{eq:T-star}\end{equation}
and view $\left(\widetilde{T^{\mathcal{F}(F)}}\right)^{*}$ as an
operator in $B(\mathcal{F}(F)\otimes_{\sigma}H)$.
\begin{example}
\label{exa:Popescu's_reconstruction_operator}In this example, we
let $N=\mathbb{C}$ and we let $F=\mathbb{C}^{d}$. We interpret $\mathbb{C}^{d}$
as $\ell^{2}(\mathbb{N})$, if $d=\infty$. If $(T,\sigma)$ is a
completely contractive covariant representation of $(\mathbb{C}^{d},\mathbb{C})$
on a Hilbert space $H$, then $\sigma$ is just the $n$-fold multiple
of the identity representation of $\mathbb{C}$, where $n$ is the
dimension of $H$. Also, $\widetilde{T}$ may be viewed in terms of
a $1\times d$ matrix of operators on $H$, $[T_{1},T_{2},\cdots,T_{d}]$,
such that $\sum_{i=1}^{d}T_{i}T_{i}^{*}\leq I_{H}$, i.e. $[T_{1},T_{2},\cdots,T_{d}]$
is a row contraction. When $\mathbb{C}^{d}\otimes H$ is identified
with the column direct sum of $d$ copies of $H$, the formula for
$\widetilde{T}:\mathbb{C}^{d}\otimes H\to H$ is $\widetilde{T}(\left(\begin{array}{c}
h_{1}\\
h_{2}\\
\vdots\\
h_{d}\end{array}\right))=\sum_{i=1}^{d}T_{i}h_{i}$. Consequently, $\widetilde{T}^{*}:H\to\mathbb{C}^{d}\otimes H$ is
given by the formula \[
\widetilde{T}^{*}h=\left(\begin{array}{c}
T_{1}^{*}h\\
T_{2}^{*}h\\
\vdots\\
T_{d}^{*}h\end{array}\right).\]
On the other hand, $\mathcal{F}(\mathbb{C}^{d})\otimes\mathbb{C}^{d}$
may be viewed as the column direct sum of $d$ copies of $\mathcal{F}(\mathbb{C}^{d})$
and when this is done, $R$ has a matricial representation as $[R_{1},R_{2},\cdots,R_{d}]$,
where $R_{i}$ is the right creation operator on $\mathcal{F}(\mathbb{C}^{d})$
determined by the $i^{th}$ canonical basis vector $e_{i}=(0,0,\cdots,0,1,0,\cdots,0)^{\intercal}$
for $\mathbb{C}^{d}$, i.e., $R_{i}\xi=\xi\otimes e_{i}$. Notice
that $[R_{1},R_{2},\cdots,R_{d}]$ is a \emph{row isometry,} meaning
that $R_{i}$'s are all isometries and that their range projections
$R_{i}R_{i}^{*}$ are mutually orthogonal. Thus, the formula for $R:\mathcal{F}(\mathbb{C}^{d})\otimes\mbox{C}^{d}\to\mathcal{F}(\mathbb{C}^{d})$
is $R\left(\begin{array}{c}
\xi_{1}\\
\xi_{2}\\
\vdots\\
\xi_{d}\end{array}\right)=\sum_{i=1}^{d}R_{i}\xi_{i}$. Consequently, in the context of this example, equation \eqref{eq:T-star}
becomes \begin{align*}
\left(\widetilde{T^{\mathcal{F}(\mathbb{C}^{d})}}\right)^{*}(\xi\otimes h) & =(R\otimes I_{H})(I_{\mathcal{F}(\mathbb{C}^{d})}\otimes\widetilde{T}^{*})(\xi\otimes h)\\
= & (R\otimes I_{H})\left(\begin{array}{c}
\xi\otimes T_{1}^{*}h\\
\xi\otimes T_{2}^{*}h\\
\vdots\\
\xi\otimes T_{d}^{*}h\end{array}\right)\\
= & \sum_{i=1}^{d}R_{i}\xi\otimes T_{i}^{*}h\\
= & (\sum_{i=1}^{d}R_{i}\otimes T_{i}^{*})(\xi\otimes h),\end{align*}
i.e., $\left(\widetilde{T^{\mathcal{F}(\mathbb{C}^{d})}}\right)^{*}$
is Popescu's \emph{reconstruction operator} $\sum_{i=1}^{d}R_{i}\otimes T_{i}^{*}$. 
\end{example}
The reconstruction operator first appeared implicitly in \cite{gP89},
where Popescu developed a characteristic operator function for noncommuting
$d$-tuples of contractions. (In this connection it was used explicitly
in \cite{gP2006}.) The first place the term ``reconstruction operator''
appeared in the literature is \cite[Page 50]{gP2009}, which began
circulating as a preprint in 2004. Since that time, the reconstruction
operator has played an increasingly prominent role in Popescu's work. In
addition, the reconstruction operator has popped up elsewhere in the
literature, but without the name attached to it.  One notable example is Orr
Shalit's paper \cite[Page 69]{oS2008}. There he attached a whole semigroup of
them to representations of certain product systems of correspondences. 
Because of Example \ref{exa:Popescu's_reconstruction_operator} we
feel justified in introducing the following terminology.
\begin{defn}
\label{def:Reconstruction_operator}If $(T,\sigma)$ is a completely
contractive covariant representation of a $C^{*}$-correspondence
pair $(F,N)$ on a Hilbert space $H$, then the adjoint of the intertwiner
of the Morita transform of the canonical stabilization of $(F,N)$
is called the \emph{reconstruction operator} of $(T,\sigma)$; i.e.,
the reconstruction operator of $(T,\sigma)$ is defined to be $(\widetilde{T^{\mathcal{F}(F)}})^{*}$
viewed as an operator in $B(\mathcal{F}(F)\otimes_{\sigma}H)$. 
\end{defn}
Our analysis begs the questions: How unique is the canonical stabilization
of a $C^{*}$-correspondence pair? Are there non-canonical stabilizations?
In general there are many stabilizations that ``compete'' with the
canonical stabilization. Organizing them seems to be a complicated
matter. To see a little of what is possible, we will briefly outline
what happens in the setting of Example \ref{exa:Popescu's_reconstruction_operator}.
So fix $(\mathbb{C}^{d},\mathbb{C})$. We shall assume $d$ is finite
to keep matters simple. We can stabilize $\mathbb{C}$ as a $C^{*}$-
algebra getting the compact operators on $\ell^{2}(\mathbb{N})$.
It is important to do this explicitly, however. So let $\mathcal{X}$
be column Hilbert space $\mathbf{C}_{\infty}$. This is $\ell^{2}(\mathbb{N})$
with the operator space structure it inherits as the set of all operators
from $\mathbb{C}$ to $\ell^{2}(\mathbb{N})$. Equivalently, it is
the set of all infinite matrices $T=(t_{ij})$ that represent a compact
operator on $\ell^{2}(\mathbb{N})$ and have the property that $t_{ij}=0$,
when $j>1$. (See \cite{BMP2000}.) We then have $\widetilde{\mathbf{C}_{\infty}}=\mathbf{R}_{\infty}$,
the row Hilbert space. Also, if $\mathcal{K}=\mathcal{K}(\ell^{2}(\mathbb{N}))$,
then $\mathbf{C}_{\infty}$ is a $\mathcal{K},\mathbb{C}$-equivalence
bimodule. So, if $E$ is any correspondence over $\mathcal{K}$ that
is equivalent to $\mathbb{C}^{d}$, then $E$ must be isomorphic to
\[
\mathbf{C}_{\infty}\otimes_{\mathbb{C}}\mathbb{C}^{d}\otimes_{\mathbb{C}}\mathbf{R}_{\infty}\simeq\mathbf{C}_{d}(\mathcal{K})\]
with its usual left and right actions of $\mathcal{K}$. Because $\mathcal{K}$
is stable, there is an endomorphism $\alpha$ of $\mathcal{K}$ such
that $\mathbf{C}_{d}(\mathcal{K})$ is isomorphic to $_{\alpha}\mathcal{K}$.
That is, $_{\alpha}\mathcal{K}$ is $\mathcal{K}$ as a right $\mathcal{K}$-module
(the module product is just the product in $\mathcal{K}$ and the
$\mathcal{K}$-valued inner product is $\langle\xi,\eta\rangle:=\xi^{*}\eta$.)
The left action of $\mathcal{K}$ is that which is implemented by
$\alpha$, i.e., $a\cdot\xi:=\alpha(a)\xi$. General theory tells
us this is the case, but we can see it explicitly as follows. Choose
a Cuntz family of $d$ isometries on $\ell^{2}(\mathbb{N})$, $\{S_{i}\}_{i=1}^{d}$.
(This means that $S_{i}^{*}S_{j}=\delta_{ij}I$ and $\sum_{i=1}^{d}S_{i}S_{i}^{*}=I$.)
Then, as is well known, $\{S_{i}\}_{i=1}^{d}$ defines an endomorphism
of $\mathcal{K}$ via the formula $\alpha(a)=\sum_{i=1}^{d}S_{i}aS_{i}^{*}$.
Note, too, that $\alpha$ extends to be a \emph{unital} endomorphism
of $B(\ell^{2}(\mathbb{N}))$ since $\sum_{i=1}^{d}S_{i}S_{i}^{*}=I$.
On the other hand, define $V:\mathbf{C}_{d}(\mathcal{K})\to\mathcal{K}$
via the formula \[
V(\left(\begin{array}{c}
a_{1}\\
a_{2}\\
\vdots\\
a_{d}\end{array}\right))=\sum_{i=1}^{d}S_{i}a_{i},\qquad\left(\begin{array}{c}
a_{1}\\
a_{2}\\
\vdots\\
a_{d}\end{array}\right)\in\mathbf{C}_{d}(\mathcal{K}).\]
Then it is a straightforward calculation to see that $V$ is a correspondence
isomorphism from $\mathbf{C}_{d}(\mathcal{K})$ onto $_{\alpha}\mathcal{K}$.
Thus $\mathcal{X}=\mathbf{C}_{\infty}$ is an equivalence bimodule
between $(_{\alpha}\mathcal{K},\mathcal{K})$ and $(\mathbb{C}^{d},\mathbb{C})$
and $(_{\alpha}\mathcal{K},\mathcal{K})$ is a bona fide contender
for a stabilization of $(\mathbb{C}^{d},\mathbb{C})$. Note that this
time $_{\alpha}\mathcal{K}$ is strictly cyclic, but the projection
$P$ is the identity.

Suppose, now, that $(T,\sigma)$ is a completely contractive covariant
representation of $(\mathbb{C}^{d},\mathbb{C})$ on a Hilbert space
$H$. Then as before $\sigma$ is an $n$-fold multiple of the identity
representation of $\mathbb{C}$ on $\mathbb{C}$, where $n$ is the
dimension of $H$ and $\widetilde{T}:\mathbb{C}^{d}\otimes H\to H$
may be viewed as a row contraction $[T_{1},T_{2},\cdots,T_{d}]$ of
operators on $H$. The induced representation of $\mathcal{K}$, $\sigma^{\mathbf{C}_{\infty}}$
is the $n$-fold multiple of the identity representation of $\mathcal{K}$
(same $n$) and a calculation along the lines of that was carried
out in Example \ref{exa:Popescu's_reconstruction_operator} shows
that $\left(\widetilde{T^{\mathbf{C}_{\infty}}}\right)^{*}=S_{1}\otimes T_{1}^{*}+S_{2}\otimes T_{2}^{*}+\cdots+S_{d}\otimes T_{d}^{*}$
acting on $\ell^{2}(\mathbb{N})\otimes H$. Thus, $\left(\widetilde{T^{\mathbf{C}_{\infty}}}\right)^{*}$
is an alternative for Popescu's reconstruction operator. How different
from his reconstruction operator $\left(\widetilde{T^{\mathbf{C}_{\infty}}}\right)^{*}$
is remains to be seen. We believe the difference could be very interesting.
We believe that the dependence of $\left(\widetilde{T^{\mathbf{C}_{\infty}}}\right)^{*}$
on the Cuntz family $\{S_{i}\}_{i=1}^{d}$ could be very interesting,
also.\bigskip

\noindent\emph{Acknowledgment:} We are very grateful to Gelu Popescu
for giving us some background and references on his reconstruction
operator.


\begin{thebibliography}{14}

\bibitem{BMP2000}D. Blecher, P. Muhly and V. Paulsen, \emph{Categories
of operator modules (Morita equivalence and projective modules)},
Mem. Amer. Math. Soc. \textbf{143} (2000), no. 681.

\bibitem {DP98}K. Davidson and D. Pitts, \emph{The algebraic structure of
non-commutative analytic Toeplitz algebras}, Math. Ann. 311 (1998), 275-303.


\bibitem{DLP2005} K. Davidson, J. Li and D. Pitts, \emph{Absolutely continuous
representations and a Kaplansky density theorem for free semigroup
algebras}, J. Functional Anal. \textbf{224} (2005), 160 \textendash{}
191.


\bibitem{MS2010}P. S. Muhly and B. Solel, \emph{Representations of
Hardy algebras: absolute continuity, intertwiners, and superharmonic
operators}, preprint thank(arXiv:1006.1398).

\bibitem{MS09}P. S. Muhly and B. Solel, \emph{The Poisson kernel
for Hardy algebras}, Complex Analysis and Operator Theory \emph{3}
(2009), 221--242.

\bibitem{MS08}P. S. Muhly and B. Solel, \emph{Schur Class Operator
Functions and Automorphisms of Hardy Algebras}, Documenta Math. \textbf{13}
(2008), 365--411.

\bibitem{MSHardy} P. S. Muhly and B. Solel, \emph{Hardy algebras,
$W^{*}$-correspondences and interpolation theory}, Math. Ann. \textbf{330}
(2004), 353-415.

\bibitem{MS2000}P. S. Muhly and B. Solel, \emph{On the Morita equivalence
of tensor algebras}, Proc. London Math. Soc. \textbf{81} (2000), 113--168.

\bibitem{MSWold}P. S. Muhly and B. Solel , \emph{Tensor algebras,
induced representations, and the Wold decomposition}, Canad. J. Math.
\textbf{51} (1999), 850-880.

\bibitem{MS98b}P. S. Muhly and B. Solel, \emph{Tensor algebras over}
$C^{\ast}$-\emph{correspondences (Representations, dilations, and}
$C^{\ast}$-\emph{envelopes}), J. Functional Anal. \textbf{158} (1998),
389--457.

\bibitem{wP73}W. Paschke, \emph{Inner product modules over $B^{*}$-algebras},
Trans. Amer. Math. Soc. \textbf{182} (1973), 443--468.

\bibitem{gP89}G. Popescu, \emph{Characteristic functions for infinite
sequences of noncommuting operators}, J. Operator Theory \textbf{22}
(1989), no. 1, 51--71.

\bibitem{gP91}G. Popescu, \emph{von Neumann inequality for
$(B(\mathcal{H})^n)_1$}, Math. Scand. \textbf{68} (1991), 292--304.

\bibitem{gP2006}G. Popescu, \emph{Operator theory on noncommutative
varieties}, Indiana Univ. Math. J. \textbf{55} (2006), no. 2, 389\textemdash{}442.

\bibitem{gP2009}G. Popescu, \emph{Unitary Invariants in Multivariable
Operator Theory}, Memoirs Amer. Math. Soc. \textbf{941} Vol. \textbf{200},
2009.

\bibitem{mR74a}M. Rieffel, \emph{Morita equivalence for $C^{*}$-algebras
and $W^{*}$-algebras}, J. Pure Appl. Alg. \textbf{5} (1974), 51--96.

\bibitem{mR74b}M. Rieffel, \emph{Induced representations of $C^{*}$-algebras},
Adv. in Math. \textbf{13} (1974), 176--257. 

\bibitem{oS2008}O. Shalit, $E_0$-dilation of strongly commuting
$CP_0$-semigroups, J. Funct. Anal. \textbf{255} (2008), 46--89.
\end{thebibliography}
\end{document}